\theoremstyle{plain}
\newtheorem{lemma}{Lemma}
\newtheorem{prop}[lemma]{Proposition}
\newtheorem{coro}[lemma]{Corollary}
\newtheorem{theo}[lemma]{Theorem}
\newtheorem{defi}[lemma]{Definition}
\newtheorem{thm-Intro}{Theorem} 
\newtheorem{cor-Intro}{Corollary} 
\newtheorem{ques}[lemma]{Question}
\numberwithin{equation}{section}
\newcommand{\Hol}{\textup{Hol}}
\begin{document}
\title[Non-existence of complete K\"ahler metric]{Non-existence of complete K\"ahler metric of negatively pinched holomorphic sectional curvature}

\author[Gunhee Cho]{Gunhee Cho}
\thanks{\footnotemark  This research was supported in part by a Simons Travel Grant.}
\address{Department of Mathematics\\
	University of California, Santa Barbara\\
	552 University Rd, Isla Vista, CA 93117.}
\email{gunhee.cho@math.ucsb.edu}


\begin{abstract} 
We prove a theorem which provides a sufficient condition for the non-existence of a complete K\"ahler--Einstein metric of negative scalar curvature whose holomorphic sectional curvature is negatively pinched:

Let $\Omega$ be a bounded weakly pseudoconvex domain in $\mathbb{C}^n$ with a K\"ahler metric $\omega$ whose holomorphic sectional curvature is negative near the topological boundary of $\Omega$ (with respect to the relative topology of $\mathbb{C}^n$) and $\omega$ admits quasi-bounded geometry. Then $\omega$ is uniformly equivalent to the Kobayashi--Royden metric and the following dichotomy holds:
	
	1. $\omega$ is complete, and $\omega$ is uniformly equivalent to the complete K\"ahler--Einstein metric with negative scalar curvature. 
	
	2. $\omega$ is incomplete, and there is no complete K\"ahler metric with negatively pinched holomorphic sectional curvature. Moreover, $\Omega$ is Carath\'eodory incomplete. 
	
Our approach is based on the construction of a K\"ahler metric of negatively pinched holomorphic sectional curvature and applying the implication of equivalence of invariant metrics inspired by Wu-Yau. 	
\end{abstract}

\maketitle


\section{Introduction}

This following question has been addressed very recently in relation to anti-bisectional curvature in a paper by Khan-Zheng \cite{GKFZ20}:

\begin{ques}
	When is it the case that a K\"ahler manifold does not admit a K\"ahler-Einstein metric with negative holomorphic sectional curvature?
\end{ques}

To address this question, it is reasonable to consider a bounded weakly pseudoconvex domain in $\mathbb{C}^n$ because pseudoconvexity is a necessary and sufficient condition for existence of complete K\"ahler-Einstein metric of negative scalar curvature on bounded domains in $\mathbb{C}^n$ by Mok-Yau \cite{NMY83}. In this paper, we show the following: 

\begin{theo}\label{thm:general_statement}
	Let $\Omega$ be a bounded weakly pseudoconvex domain in $\mathbb{C}^n$ with a K\"ahler metric $\omega$ whose holomorphic sectional curvature is negatively pinched near the topological boundary of $\Omega$ (with respect to relative topology of $\mathbb{C}^n$) and $\omega$ admits quasi-bounded geometry. Then $\omega$ is uniformly equivalent to the Kobayashi--Royden metric and the following dichotomy holds:
	
	1. $\omega$ is complete, and $\omega$ is uniformly equivalent to the complete K\"ahler--Einstein metric with negative scalar curvature, or
	
	2. $\omega$ is incomplete, and there is no complete K\"ahler metric with negatively pinched holomorphic sectional curvature. Moreover, $\Omega$ is Carath\'eodory incomplete. 
\end{theo}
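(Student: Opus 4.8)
The plan is to first establish the uniform equivalence $\omega\asymp F_K$ with the Kobayashi--Royden metric $F_K$ asserted in the conclusion, and then to split into the two cases according to completeness, which for $\omega$ becomes equivalent to Kobayashi completeness once the equivalence is in hand. For the inequality $\|v\|_\omega\le C\,F_K(p,v)$ I would invoke the Ahlfors--Schwarz--Yau lemma: given $v\in T_p\Omega$, feed the extremal disk $f\colon\mathbb{D}\to\Omega$ with $f(0)=p$, $f'(0)=v/F_K(p,v)$ into the Schwarz lemma using the upper bound on the holomorphic sectional curvature, so that $f^*\omega$ is dominated by the Poincaré metric and the $\omega$-length of $v$ is controlled by $F_K(p,v)$. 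The reverse inequality $F_K(p,v)\le C\|v\|_\omega$ is where the quasi-bounded geometry enters: it supplies uniform holomorphic coordinate charts of a fixed radius on which $\omega$ is comparable to the Euclidean metric, and these charts contain holomorphic disks of definite size realizing any prescribed direction, which bounds $F_K$ from above by $\omega$. Since the curvature hypothesis is only assumed near $\p\Omega$, I would treat the compact interior part separately using compactness together with quasi-bounded geometry, and then glue the two estimates.

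With $\omega\asymp F_K$ in place, the dichotomy follows from the behaviour of $F_K$. In Case 1, $\omega$ complete forces $(\Omega,F_K)$ to be Kobayashi complete, so $\Omega$ is a complete hyperbolic bounded pseudoconvex domain and Mok--Yau provides a complete Kähler--Einstein metric $\omega_{\ke}$ with $\Ric(\omega_{\ke})=-\omega_{\ke}$. To upgrade this to $\omega\asymp\omega_{\ke}$ I would run a Wu--Yau-style argument: using $\omega$ as a reference metric of quasi-bounded geometry, a Schwarz lemma against the Ricci-pinched Einstein target gives $\omega_{\ke}\lesssim F_K$, while the a priori $C^0$ and $C^2$ estimates for the relevant complex Monge--Ampère equation — available precisely because $\omega$ has quasi-bounded geometry — give the reverse bound, so that $\omega_{\ke}\asymp F_K\asymp\omega$.

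In Case 2, $\omega$ incomplete makes $F_K$ incomplete by the equivalence. For the non-existence statement, suppose $\omega'$ were a complete Kähler metric with holomorphic sectional curvature pinched between two negative constants. By Berger's lemma a two-sided bound on the holomorphic sectional curvature bounds the full curvature tensor, so $\omega'$ has bounded curvature, and completeness together with the bounded-domain structure then yields the quasi-bounded geometry needed; thus $\omega'$ satisfies the hypotheses of the theorem and Step~1 gives $\omega'\asymp F_K\asymp\omega$. Since uniformly equivalent metrics are simultaneously complete or incomplete, $\omega'$ complete would force $\omega$ complete, a contradiction. Finally, $\Omega$ is Carathéodory incomplete because the Carathéodory metric satisfies $F_C\le F_K$ pointwise, so any path approaching $\p\Omega$ with finite $F_K$-length — which exists since $F_K\asymp\omega$ is incomplete — has finite $F_C$-length as well.

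The main obstacle I expect is Step~1, and within it the reverse inequality $F_K\lesssim\omega$ together with the localization of the curvature hypothesis to a neighbourhood of $\p\Omega$: converting quasi-bounded geometry into uniformly sized holomorphic disks realizing arbitrary directions, and patching the boundary estimate with the interior estimate, is the delicate point. The construction of the equivalent Kähler--Einstein metric in Case 1 is the second most technical step, since it requires the full force of the quasi-bounded-geometry a priori estimates to control the Monge--Ampère solution.
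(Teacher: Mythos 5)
Your Step 1 contains the decisive gap, and it is exactly the point your last paragraph flags as ``delicate'' without resolving. The inequality $\|v\|_\omega\le C\,F_K(p,v)$ via the Ahlfors--Schwarz--Yau lemma is not localizable: the comparison requires the Gaussian curvature of $f^*\omega$ to be bounded above by a negative constant on \emph{all} of $\mathbb{D}$, but a competitor disk $f\colon\mathbb{D}\to\Omega$ through a point $p$ near $\partial\Omega$ may travel through the interior set $\mathcal{C}_\epsilon=\{H_\omega>-\epsilon\}$ where the hypothesis gives no curvature upper bound at all. Compactness does handle points $p$ in a fixed compact set (there $\omega\lesssim$ Euclidean and $F_K\gtrsim$ Euclidean since $\Omega$ is bounded), but no ``gluing'' repairs the near-boundary case, because the obstruction lives along the disk's image, not at the base point. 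The paper supplies precisely the missing mechanism: it introduces the auxiliary metric $\omega_\Omega:=m\,\omega_P+\omega$, where $\omega_P$ is the Poincar\'e metric of a ball $D\Supset\Omega$, and uses Proposition \ref{prop:hsc-ub} together with the fact that $\mathcal{C}_\epsilon$ is compactly contained in $\Omega$ (so the ratio $u$ there is bounded below) to make $H_{\omega_\Omega}$ \emph{globally} negatively pinched for $m\gg 0$, while quasi-bounded geometry of $\omega_\Omega$ is inherited since $\omega_P|_\Omega$ is a smooth bounded metric. Lemmas \ref{q-bounded} and \ref{ub_hsc} then apply to $\omega_\Omega$, and the sandwich $C^{-1}\chi_\Omega\le\sqrt{\omega}\le\sqrt{\omega_\Omega}\le C\chi_\Omega$ (using $\omega\le\omega_\Omega$) transfers the equivalence to $\omega$ itself. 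Your Case 1 inherits the same defect: running Wu--Yau's Monge--Amp\`ere machinery ``using $\omega$ as reference'' requires globally pinched holomorphic sectional curvature, which $\omega$ lacks; the paper instead applies Wu--Yau's Theorem 3 to the auxiliary metric $\omega_\Omega$.

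There is a second, smaller gap in your Case 2. Berger-type polarization does bound the full curvature tensor of $\omega'$ from a two-sided holomorphic sectional curvature bound, but quasi-bounded geometry in the sense of Definition \ref{def_quasi_bounded} is equivalent (Theorem \ref{thm:equiv-quasi-bounded-geometry}) to bounds on $\nabla^q R_m$ for \emph{every} $q$, and a curvature bound alone provides no derivative bounds; so $\omega'$ does not obviously satisfy the hypotheses of the theorem, and your self-referential application of Step 1 to $\omega'$ is unjustified. The correct (and the paper's) route is to quote Wu--Yau's Theorems 2 and 3 directly: any complete K\"ahler metric with negatively pinched holomorphic sectional curvature is uniformly equivalent to the Kobayashi--Royden metric, which contradicts the incompleteness of $\chi_\Omega\asymp\omega_\Omega$. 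Your Carath\'eodory incompleteness argument via $\gamma_\Omega\le\chi_\Omega$ is sound and essentially matches the paper's Lemma \ref{lemma:Caratheodory}, though note it only needs the valid half of your Step 1 (the upper bound from quasi-bounded geometry), not the full equivalence.
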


Our approach is by constructing one K\"ahler metric of negatively pinched holomorphic sectional curvature on $\Omega$ (not just near the boundary) and applying its consequences of equivalence of invariant metrics based on Wu-Yau's work \cite{DY17}. The dichotomy happens because the completeness of the K\"ahler metric $\omega$ is not required in the process of construction of the K\"ahler metric of negatively pinched holomorphic sectional curvature in Theorem~\ref{thm:general_statement}. 

For classes of bounded domains in $\mathbb{C}^n$ with uniform squeezing properties which must be weakly pseudoconvex, the quasi-bounded geometry of the Bergman metric and the complete K\"ahler--Einstein metric of negative scalar curvature are  necessary conditions by work of Yeung \cite[Theorem 2-(b)]{SKY09}. Also, it is known that holomorphic sectional curvature of the Bergman metric is negative near the boundary for smoothly bounded strictly pseudoconvex domains \cite{KLE78}, and for the symmetrized bidisc by recent work of Cho-Yuan \cite{GCYY20} (also see \cite{MR3765515}). On the other hand, there exists an example of a bounded domain in $\mathbb{C}$ with an incomplete Bergman metric, and one can take the product of this domain for higher dimensional cases \cite{MR1841396}. Also, there are several known bounded weakly pseudoconvex domains that are not Kobayashi complete which implies Carath\'eodory incomplete, refer to \cite[Chapter 14]{JP13}.

Theorem~\ref{thm:general_statement} under the incompleteness of $\omega$ implies the following: 

\begin{coro}
	Let $\Omega$ be a bounded weakly pseudoconvex domain in $\mathbb{C}^n$ with an incomplete K\"ahler metric $\omega$ satisfying all assumptions in Theorem~\ref{thm:general_statement}.	Then there is no complete K\"ahler metric with negatively pinched holomorphic sectional curvature. In particular, any complete K\"ahler--Einstein metric of negative scalar curvature does not admit a negatively pinched holomorphic sectional curvature.  
\end{coro}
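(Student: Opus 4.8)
The plan is to read off both assertions directly from the dichotomy in Theorem~\ref{thm:general_statement}, supplemented by the Mok--Yau existence theorem \cite{NMY83}; no new analytic estimate is needed beyond what Theorem~\ref{thm:general_statement} already supplies. First I would note that the hypotheses of the Corollary coincide exactly with those of Theorem~\ref{thm:general_statement}, with the single added stipulation that $\omega$ be incomplete. Since completeness of $\omega$ is precisely the condition that separates the two branches of the dichotomy, assuming $\omega$ incomplete places us in branch~2. That branch asserts, word for word, that there is no complete K\"ahler metric on $\Omega$ with negatively pinched holomorphic sectional curvature. This is the first assertion, and it requires nothing further.

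For the second assertion I would invoke Mok--Yau: a bounded weakly pseudoconvex (hence pseudoconvex) domain $\Omega\subset\mathbb{C}^n$ does admit a complete K\"ahler--Einstein metric $g_{\ke}$ of negative scalar curvature \cite{NMY83}. The point is therefore not that such a metric fails to exist, but that the $g_{\ke}$ provided by Mok--Yau is itself a complete K\"ahler metric on $\Omega$; applying the first assertion to $g_{\ke}$ shows that it cannot have negatively pinched holomorphic sectional curvature. Thus the complete K\"ahler--Einstein metric of negative scalar curvature, though present by Mok--Yau, is necessarily \emph{not} negatively pinched in holomorphic sectional curvature, which is the conclusion claimed. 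This also clarifies the contrast with branch~1, where $\omega$ is complete and uniformly equivalent to $g_{\ke}$, so that there $g_{\ke}$ does carry negatively pinched holomorphic sectional curvature.

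I expect the only real obstacle to be interpretive rather than technical: one must be careful to state the second conclusion as \emph{failure of the pinching property} for $g_{\ke}$, not as non-existence of $g_{\ke}$, since the latter would directly contradict Mok--Yau for the pseudoconvex domain $\Omega$. The reconciliation is that Theorem~\ref{thm:general_statement} constrains only complete K\"ahler metrics with negatively pinched holomorphic sectional curvature, whereas the K\"ahler--Einstein metric of negative scalar curvature is a priori only negatively curved in the Ricci sense; the incomplete branch forbids upgrading it to a negatively pinched holomorphic sectional curvature metric, and it is this distinction that makes the argument consistent and gives the Corollary its content.
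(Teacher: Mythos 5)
Your derivation of the corollary is correct as far as it goes, and at the level of the statement it is exactly how the paper positions it: the corollary is introduced with the words ``Theorem~\ref{thm:general_statement} under the incompleteness of $\omega$ implies the following,'' so placing yourself in branch~2 of the dichotomy and then applying the resulting non-existence statement to the Mok--Yau metric $g^{\ke}_{\Omega}$ \cite{NMY83} is legitimate, and your insistence that the second sentence asserts \emph{failure of pinching} for $g^{\ke}_{\Omega}$ rather than its non-existence agrees with the paper's own wording (``the holomorphic sectional curvature cannot be negatively pinched''). Where you diverge from the paper is that the paper does not in fact treat this statement as a purely formal consequence of the theorem: it restates it verbatim as Proposition~\ref{thm:general_statement2} and proves it there directly, precisely because the proof of Theorem~\ref{thm:general_statement} \emph{defers} its incomplete branch to that proposition. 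The paper's substantive argument runs: the metric $\omega_\Omega=m\omega_P+\omega$ built in the proof of the theorem is incomplete (since $\omega$ is), has negatively pinched holomorphic sectional curvature and quasi-bounded geometry, hence by Lemmas~\ref{q-bounded} and~\ref{ub_hsc} is uniformly equivalent to the Kobayashi--Royden metric, which is therefore incomplete; any complete K\"ahler metric with negatively pinched holomorphic sectional curvature would, by Wu--Yau \cite[Theorems 2, 3]{DY17}, also be uniformly equivalent to the Kobayashi--Royden metric and force it to be complete --- a contradiction. Your route buys brevity by quoting the theorem; the paper's buys non-circularity by running the uniform-equivalence contradiction itself. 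If your write-up were to stand inside the paper, you should cite Proposition~\ref{thm:general_statement2} (or reproduce its argument) rather than branch~2 of the theorem, since the proof of that branch \emph{is} the proposition.

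One genuine slip, though confined to a parenthetical remark: in discussing branch~1 you assert that there $g^{\ke}_{\Omega}$ ``does carry negatively pinched holomorphic sectional curvature'' because it is uniformly equivalent to $\omega$. Uniform equivalence of metrics does not transfer curvature bounds, and the paper never claims pinching for the K\"ahler--Einstein metric in the complete case --- pinching is established only for the constructed metric $\omega_\Omega$. Delete or weaken that sentence; nothing in the corollary depends on it.
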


Theorem~\ref{thm:general_statement} under the completeness of $\omega$ implies the equivalence of two invariant metrics on closed submanifolds of $\Omega$:

\begin{coro}\label{cor:new_class}
	Given a bounded weakly pseudoconvex domain $\Omega$ in $\mathbb{C}^n$ with a complete K\"ahler metric $\omega$ satisfying all assumptions in Theorem~\ref{thm:general_statement}, any closed complex submanifold $S$ of $\Omega$ admits a complete K\"ahler metric $g$ with holomorphic sectional curvature negatively pinched. Moreover, $g$, the unique complete K\"ahler-Einstein metric $g_S^{KE}$ of negative scalar curvature, and the Kobayashi-Royden metric $g_S^K$ are uniformly equivalent.
\end{coro}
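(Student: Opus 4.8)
The plan is to situate the argument in the completeness branch of Theorem~\ref{thm:general_statement}, where $\omega$ is complete and, on $\Omega$, uniformly equivalent both to the Kobayashi--Royden metric and to the complete K\"ahler--Einstein metric of negative scalar curvature, and then to transport a negatively pinched metric to $S$. Once a complete K\"ahler metric $g$ on $S$ with negatively pinched holomorphic sectional curvature has been produced, I would not establish the three-way equivalence $g\simeq g_S^{KE}\simeq g_S^K$ by hand; instead it is inherited from the Wu--Yau machinery \cite{DY17}, which says that a complete K\"ahler manifold with negatively pinched holomorphic sectional curvature carries a unique complete K\"ahler--Einstein metric of negative scalar curvature uniformly equivalent to the background metric, and has Kobayashi--Royden metric uniformly equivalent as well. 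Thus the whole corollary reduces to constructing $g$, and the very existence of $g_S^{KE}$ is obtained as an output of \cite{DY17} rather than assumed.

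For the construction I would take as candidate the restriction $g := \omega|_S$ (equivalently, the restriction to $S$ of the globally pinched metric built inside the proof of Theorem~\ref{thm:general_statement}). Two of the three required properties come essentially for free. First, since $S$ is a \emph{closed} complex submanifold of the complete Riemannian manifold $(\Omega,\omega)$, the induced metric $g$ is complete: its intrinsic distance dominates the restricted ambient distance, so any $g$-Cauchy sequence is Cauchy in $(\Omega,\omega)$, converges in $\Omega$, and its limit lies in $S$ by closedness. Second, because $S$ is a complex submanifold, the Gauss equation gives, for every unit $(1,0)$ vector $X$ tangent to $S$,
\begin{equation*}
H_g(X) = H_\omega(X) - 2\,\|\sigma(X,X)\|^2 \le H_\omega(X),
\end{equation*}
where $\sigma$ is the second fundamental form of $S$ in $\Omega$; hence the negative \emph{upper} bound on the holomorphic sectional curvature descends from $\omega$ wherever $\omega$ possesses it, in particular near the part of $S$ running out to $\p\Omega$.

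The main obstacle is the remaining negative \emph{lower} bound, together with the quasi-bounded geometry of $(S,g)$ needed to feed the construction of Theorem~\ref{thm:general_statement}; both are governed by the single quantity $\|\sigma\|$. By the Gauss equation the pinching from below fails exactly by the size of $\|\sigma\|^2$, so the heart of the argument is to bound $\sigma$ and its covariant derivatives uniformly on $S$. Here I would use that complex submanifolds of a K\"ahler manifold are minimal and that $\omega$ has quasi-bounded geometry, so that in the uniform charts furnished by quasi-bounded geometry the embedding $S\hookrightarrow\Omega$ is controlled and $\sigma$ together with its derivatives are uniformly bounded; this simultaneously yields the lower bound $H_g \ge H_\omega - C$ near the ideal boundary and the quasi-bounded geometry of $(S,g)$. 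With $(S,g)$ complete, quasi-bounded, and pinched near its ideal boundary, I would then run the construction from the proof of Theorem~\ref{thm:general_statement} verbatim on $S$ to manufacture a complete K\"ahler metric with \emph{globally} negatively pinched holomorphic sectional curvature, uniformly equivalent to $g$.

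Applying \cite{DY17} to this globally pinched complete K\"ahler metric on $S$ then produces the unique complete K\"ahler--Einstein metric $g_S^{KE}$ of negative scalar curvature and the uniform equivalences $g\simeq g_S^{KE}\simeq g_S^K$, completing the corollary. The one step that I expect to demand genuine care is the uniform control of the second fundamental form $\sigma$: unlike completeness and the upper curvature bound, it is not formal, and obtaining both the lower pinching bound and the quasi-bounded geometry of $(S,g)$ hinges entirely on converting the ambient quasi-bounded geometry into uniform estimates for the embedding.
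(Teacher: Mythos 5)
Your skeleton is essentially the paper's: the paper restricts the globally pinched metric $g_{\Omega}=m\omega_P+\omega$ built in the proof of Theorem~\ref{thm:general_statement} to $S$, gets completeness from closedness of $S$, gets the negative upper curvature bound from the decreasing property via the Gauss--Codazzi equation, and concludes with Theorems 2 and 3 of \cite{DY17}. (Your detour of restricting $\omega|_S$, which is pinched only near the boundary, and then re-running the construction of Theorem~\ref{thm:general_statement} on $S$ is unnecessary: restricting $g_\Omega$ itself already gives a \emph{global} negative upper bound on $S$, which is what the paper does.) In both your argument and the paper's, everything therefore hinges on one point: the negative \emph{lower} bound on the holomorphic sectional curvature of the restricted metric, which by the Gauss equation $H_g(X)=H_{g_\Omega}(X)-2\|\sigma(X,X)\|^2$ requires a uniform bound on the second fundamental form $\sigma$ of $S$ in $\Omega$.

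This is where your proposal has a genuine gap: the uniform bound $\sup_S\|\sigma\|<\infty$ does not follow from ambient quasi-bounded geometry together with minimality. Quasi-bounded geometry furnishes uniform charts in which the \emph{ambient metric} and its derivatives are controlled, but it imposes no constraint whatsoever on how $S$ sits inside such a chart: a closed complex submanifold may bend at scales arbitrarily small compared with the chart radius $r_1$ (locally a graph of the shape $w=\varepsilon\,h(z/\varepsilon)$ through the chart center has $\|\sigma\|\sim \varepsilon^{-1}$ there), and nothing in the hypotheses of the corollary rules this out along a sequence of points of $S$ tending to $\partial\Omega$. Minimality does not help either: complex submanifolds of K\"ahler manifolds are automatically minimal, which only says the trace of $\sigma$ vanishes and gives no control on $\|\sigma\|$. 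So in your argument the bound on $\sigma$ is an unproved additional hypothesis, not a consequence of the setup. You have, however, correctly isolated the real difficulty: the paper's own proof asserts the boundedness of the second fundamental form with a bare ``Therefore'' and no justification, so the published argument leaves exactly the same hole that your sketch fails to close.
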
 

Theorem~\ref{thm:general_statement} also has implications for quotient spaces. For quotient spaces, we define a lattice $\Gamma$ on $\Omega$ to be a discrete group acting properly discontinuously as biholomorphisms on $\Omega$. 

\begin{coro}\label{cor:new_class2}
	Given a bounded weakly pseudoconvex domain $\Omega$ in $\mathbb{C}^n$ with a complete K\"ahler metric $\omega$ satisfying all assumptions in Theorem~\ref{thm:general_statement}, assume that $\Gamma$ is a torsion-free lattice on $\Omega$. Then a compact quotient $N=\Omega/\Gamma$ has to be a projective algebraic variety of general type. A non-compact quotient which has finite volume with respect to $\omega$ has to be a quasi-projective variety of log-general type.
\end{coro}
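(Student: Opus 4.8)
The plan is to push the canonical invariant metrics from $\Omega$ down to the quotient and then read off positivity of the (log-)canonical bundle. Since we are in the completeness alternative of Theorem~\ref{thm:general_statement}, the metric $\omega$ is uniformly equivalent both to the complete K\"ahler--Einstein metric $g^{KE}$ of negative scalar curvature and to the Kobayashi--Royden metric. Because $\Gamma$ acts by biholomorphisms, and both the Kobayashi--Royden metric (by functoriality) and the complete K\"ahler--Einstein metric of negative scalar curvature (by its uniqueness, hence canonicity) are biholomorphic invariants, they are $\Gamma$-invariant and therefore descend to the quotient. Torsion-freeness guarantees that $\Gamma$ acts freely and properly discontinuously, so $N=\Omega/\Gamma$ is a complex manifold and the descended objects are genuine smooth metrics; I will write $g^{KE}_N$ for the induced K\"ahler--Einstein metric, which is again complete with $\Ric(g^{KE}_N)=-c\,g^{KE}_N$ for some $c>0$. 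In the finite volume case the uniform equivalence of $\omega$ and $g^{KE}$ transports finiteness of volume to $g^{KE}_N$.

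For the compact quotient, I would first observe that $g^{KE}_N$ has negative Ricci curvature, so its Ricci form represents $-2\pi c_1(N)$ and is a negative $(1,1)$-form; hence $c_1(K_N)=-c_1(N)$ is represented by a K\"ahler class and $K_N$ is ample. By the Kodaira embedding theorem $N$ is then a projective algebraic variety, and ampleness of $K_N$ forces the Kodaira dimension to equal $\dim_{\mathbb{C}} N$, that is, $N$ is of general type. Alternatively, one may descend the constructed metric of negatively pinched holomorphic sectional curvature and invoke the Wu--Yau theorem to reach the same conclusion.

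For the non-compact quotient of finite volume the argument is considerably harder, and this is the step I expect to be the main obstacle. The idea is to compactify $N$. The quasi-bounded geometry assumption provides a uniform lower bound on the injectivity radius together with uniform bounds on the curvature tensor and its covariant derivatives, so that $(N,g^{KE}_N)$ is a complete K\"ahler--Einstein manifold of negative Ricci curvature, finite volume, and bounded geometry. Under these hypotheses I would apply the compactification theory for such manifolds (in the spirit of Mok--Zhong and Nadel--Tsuji) to produce a projective variety $\bar N$ and a divisor $D$ with $N\cong \bar N\setminus D$; the role of finite volume together with bounded geometry is precisely to show that each end of $N$ is of cusp type and can be filled in. The delicate points are the control of the geometry of the ends and the verification that the resulting compactification is projective with at worst log-canonical boundary behavior.

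Finally, to upgrade quasi-projectivity to log-general type I would use the Ricci form of $g^{KE}_N$ once more. Away from $D$ it represents the first Chern class of $K_{\bar N}+D$ and is strictly positive, while the finite volume and bounded geometry near $D$ allow one to control the mass of the associated positive current across the boundary, showing that $K_{\bar N}+D$ is big. Bigness of the log-canonical bundle is exactly the statement that $N$ is of log-general type. The main technical burden throughout the non-compact case is the analytic control at infinity required to make both the compactification and the positivity estimate for $K_{\bar N}+D$ rigorous.
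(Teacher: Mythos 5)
The paper's own proof is a one-line reduction: it invokes Corollary 2 of Yeung \cite{SKY09} verbatim, the only point needing verification being that the complete K\"ahler--Einstein metric has bounded Riemannian sectional curvature, and this it gets from the fact that the K\"ahler--Einstein metric produced by the Wu--Yau machinery itself has quasi-bounded geometry \cite[Lemma 31]{DY17}. Your proposal in effect re-proves Yeung's corollary from scratch --- descend the invariant metrics to $N$ (correct: uniqueness of the normalized complete K\"ahler--Einstein metric gives $\Gamma$-invariance, and torsion-freeness gives a manifold quotient), ampleness of $K_N$ via the negative Ricci form plus Kodaira embedding in the compact case, and Mok--Zhong/Nadel--Tsuji compactification in the finite-volume case. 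That is the right skeleton and is exactly what sits behind the paper's citation, so in substance the routes coincide; yours is just the unpacked version.

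There is, however, one genuine gap as written. You derive ``bounded geometry'' of $(N,g^{KE}_N)$ from the quasi-bounded geometry of $\omega$. But uniform equivalence $C^{-1}\omega \leq g^{KE} \leq C\,\omega$ transfers neither curvature bounds nor derivative bounds from $\omega$ to $g^{KE}$: uniformly equivalent metrics can have wildly different curvature. The compactification theorems you want require bounded curvature of the Einstein metric itself, so this step must be argued separately; the correct source is precisely the one the paper uses, namely that the K\"ahler--Einstein metric obtained from a complete K\"ahler metric of negatively pinched holomorphic sectional curvature has quasi-bounded geometry, hence bounded curvature \cite[Lemma 31]{DY17}. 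Relatedly, your claim that quasi-bounded geometry yields ``a uniform lower bound on the injectivity radius'' is false: the maps $\psi$ in Definition~\ref{def_quasi_bounded} are only required to be nonsingular, not injective --- that is the whole point of the prefix ``quasi'' --- and on a finite-volume noncompact quotient the injectivity radius necessarily decays along the cusps, so no such bound can hold. Fortunately the compactification results need only finite volume, bounded curvature, and negative Ricci curvature, not an injectivity radius bound, so this misstatement is removable; with the Wu--Yau lemma supplying bounded curvature of $g^{KE}$, your argument closes.
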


\subsection*{The Acknowledgments}
Author thanks professor Damin Wu and professor Kwu-Hwan Lee for very helpful discussions. The author thanks the anonymous referee for a careful reading and remarks that greatly helped improve the presentation of the paper. 




\section{Invariant metrics, Quasi-bounded geometry, Holomorphic sectional curvature}
Let $\Omega$ be a domain in $\mathbb{C}^n$. A pseudometric $F(z,u) : \Omega \times \mathbb{C}^n \rightarrow [0,\infty]$ on a domain $\Omega$ in $\mathbb{C}^n$ is called (biholomorphically) invariant if $F(z,\lambda u)=|\lambda|F(z,u)$ for all $\lambda \in \mathbb{C}^n$, and $F(z,u)=F(f(z),f’(z)u)$ for any biholomorphism $f : \Omega \rightarrow \Omega’$. The Kobayashi-Royden metric, Bergman metric and a K\"ahler-Einstein metric of negative scalar curvature are examples of invariant metrics on bounded weakly pseudoconvex domains in $\mathbb{C}^n$. 

For a bounded domain $\Omega$ in $\mathbb{C}^n$, denote by $A^2(\Omega)$ the holomorphic functions in $L^2(\Omega)$. Let $\{\varphi_j : j \in \mathbb{N}  \}$ be an orthonormal basis for $A^2(\Omega)$ with respect to the $L^2$-inner product. The Bergman kernel $K_\Omega$ associated to $\Omega$ is given by 
\begin{equation*}
	K_{\Omega}(z,\overline{z})=\sum_{j=1}^{\infty}\varphi_j(z)\overline{\varphi_j(z)}. 
\end{equation*}
Note that $K_{\Omega}$ does not depend on the choice of orthonormal basis, and gives rise to an invariant metric, the Bergman metric on $\Omega$:
\begin{equation*}
	g^B_{\Omega}(\xi,\xi)={\sum_{\alpha,\beta =1}^{n}\frac{{\partial}^2 \log K_{\Omega}(z,\overline{z}) }{\partial z_{\alpha}\partial \overline{z_{\beta}}}\xi_{\alpha}\overline{\xi_{\beta}}}.
\end{equation*}

The fundamental property of the Bergman kernel $K_{\Omega}$ on a bounded domain $\Omega \subset \mathbb{C}^n$ is the following transformation law under a biholomorphic map $\Psi : \Omega_1 \rightarrow \Omega_2$ between two bounded domains $\Omega_1,\Omega_2$ in $\mathbb{C}^n$:
\begin{equation*}
	K_{\Omega_2}\circ \Psi = |det \Psi'|^{-2} K_{\Omega_1},
\end{equation*}
here $det \Psi'$ is the holomorphic Jacobian of $\Psi$. This transformation law implies the biholomorphic invariance of the Bergman metric, i.e., the invariant metric. 

Let $\mathbb{D}$ denote the open unit disk in $\mathbb{C}$. Let $z\in \Omega$ and $v\in T_{z}\Omega$ a tangent vector at $z$. The Kobayashi-Royden metric is defined by 
\begin{equation}
	\chi_{\Omega}(z;v)=\inf\{\frac{1}{\alpha} : \alpha>0, f\in \Hol(\mathbb{D},\Omega), f(0)=z, f'(0)=\alpha v\}.
\end{equation}
The Carath\'eodory--Reiffen metric is defined by
\[\gamma_{\Omega}(z;v)=\sup\{|df(z)v| : f\in \Hol(\Omega,\mathbb{D})  \}. \]

Note that each of these three invariant metrics are also defined on any complex manifolds.

The existence of a complete K\"ahler-Einstein metric on a bounded pseudoconvex domain was given in the main theorem in \cite{NMY83}. Based on this result, we can always find a unique complete K\"ahler-Einstein metric of Ricci curvature $-1$ on a bounded weakly pseudoconvex domain $G$, i.e., $g^{KE}_{\Omega}$ satisfies $g^{KE}_{\Omega}=-Ric_{g^{KE}_{\Omega}}$ as a two tensor.

The notion of quasi-bounded geometry was introduced by S.T. Yau and S.~Y. Cheng (\cite{CY80}) and we follow the description of quasi-bounded geometry from Wu and Yau \cite{DY17}. We adopt the following formulation. Let $(M,\omega)$ be an $n$-dimensional K\"ahler manifold. Denote by $B_{\mathbb{C}^n}(r)$ the open ball centered at the origin in $\mathbb{C}^n$ of radius $r$ with respect to the standard metric $\omega_{\mathbb{C}^n}$. 

\begin{defi}\label{def_quasi_bounded}
	An $n$-dimensional K\"ahler manifold $(M,\omega)$ is said to have {\em quasi-bounded geometry} if there exist two constants $r_2>r_1>0$  such that for each point $p\in M$, there is a domain $U\subset \mathbb{C}^n$ and a nonsingular holomorphic map $\psi : U \rightarrow M$ satisfying the following properties:
	
	(1) $B_{\mathbb{C}^n}(r_1)\subset U \subset B_{\mathbb{C}^n}(r_2)$ and $\psi(0)=p$;
	
	(2) there exists a constant $C>0$ depending only on $r_1,r_2,n$ such that 
	\begin{equation*}
	C^{-1}\omega_{\mathbb{C}^n} \leq \psi^{*}(\omega) \leq C\omega_{\mathbb{C}^n} \quad \text{ on } U;
	\end{equation*}
	
	(3)	for each integer $l\geq 0$, there exists a constant $A_l$ depending only on $l,n,r_1,r_2$ such that 
	\begin{equation*}
	\sup_{x\in U}\left |\frac{\partial^{|\nu|+|\mu|}g_{i\overline{j}} }{\partial v^{\mu}\, \partial \overline{v}^{\nu} } \right |\leq A_l, \text{ for all } |\mu|+|\nu|\leq l, 
	\end{equation*}
	where $g_{i\overline{j}}$ are the components of $\psi^{*}\omega$ on $U$ in terms of the natural coordinates $(v^1,\cdots,v^n)$, and $\mu,\nu$ are multiple indices with $|\mu|=\mu_1+\cdots +\mu_n$.
	We choose $r_1$ the largest possible number for a fixed K\"ahler metric $\omega$ and call it the {\em radius} of quasi-bounded geometry. 
\end{defi}

In Definition~\ref{def_quasi_bounded}, the larger radius $r_2$ is needed in the interior Schauder estimates for the Monge-Amp\`ere equation in \cite{DY17}: Those estimates hold on bounded domains with a uniform upper bound for diameters. For the radius of quasi-bounded geometry $r_1$, as the injectivity radius in Riemannian geometry is defined for a fixed Riemannian metric, $r_1$ is also defined for a fixed K\"ahler metric $\omega$ and a fixed quasi-coordinate atlas $\{(U, \psi)\}$ on $M$. The latter is implicitly indicated in \cite[Definition 8]{DY17}. In this paper we rigorously state that $r_1> 0$ is a radius of quasi-bounded geometry of $(M, \omega)$ associated with the quasi-coordinate atlas on $M$ and define `the' radius of quasi-bounded geometry of $(M, \omega)$ to be the largest possible number of the above $r_1$'s. 

Comparing Riemannian geometry, if the injectivity radius of a Riemannian manifold is nonzero, one can rescale it to be arbitrarily large, but such a rescaling usually does not make essential change for a geometric problem. To carry out geometric analysis on manifolds, especially to handle the subtle situation in the case of Riemannian injectivity radius being zero, what we need is \emph{one} $r_1 > 0$ or \emph{one pair} $r_2 > r_1 > 0$ in \cite[Definition 8]{DY17} that depend only on the curvature bounds and dimension (\cite[Theorem 9]{DY17}). On the other hand, the injective radius is the radius of the geodesic ball for which the exponential map is bijective onto its image, whereas the non-singular holomorphic maps required by the radius of quasi-bounded geometry need not be bijective onto its image. 

The following comparison inequality is proven in the Wu-Yau paper. Those authors assume that the K\"ahler metric is complete, but their proof does not actually use completeness.

\begin{lemma}\cite[Lemma 20]{DY17}\label{q-bounded}
Suppose a K\"ahler manifold $(M,\omega)$ has quasi-bounded geometry. Then the Kobayashi-Royden metric $\chi_{M}$ satisfies
\begin{equation*}
	\chi_{M}(x;\xi)\leq C|\xi|_\omega, \text{ for all } x\in M, \xi\in T'_x M,
\end{equation*}
where $C$ depends only on the radius of quasi-bounded geometry of $(M,\omega)$. 
\end{lemma}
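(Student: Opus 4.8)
The plan is to bound $\chi_M$ from above at each point by exhibiting an explicit holomorphic disk into $M$, taking the quasi-bounded geometry charts as the source of these disks and using the distance-decreasing property of the Kobayashi--Royden metric to push the estimate forward.

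First I would fix $x\in M$ and $\xi\in T'_x M$ and invoke Definition~\ref{def_quasi_bounded} to obtain a nonsingular holomorphic map $\psi:U\to M$ with $\psi(0)=x$ and $B_{\mathbb{C}^n}(r_1)\subset U$. Restricting $\psi$ to $B_{\mathbb{C}^n}(r_1)$ and using that $\chi$ does not increase under holomorphic maps (any disk in the ball composes with $\psi$ to a disk in $M$ carrying the same normalizing constant), I would obtain, for every $v\in\mathbb{C}^n$,
\[
\chi_M\big(x;\psi'(0)v\big)\le \chi_{B_{\mathbb{C}^n}(r_1)}(0;v).
\]
The right-hand side is elementary: the affine disk $\zeta\mapsto r_1\zeta\,v/|v|$ maps $\mathbb{D}$ into $B_{\mathbb{C}^n}(r_1)$, sends $0$ to $0$, and realizes the direction $v$, which yields $\chi_{B_{\mathbb{C}^n}(r_1)}(0;v)\le|v|/r_1$ (in fact an equality), with $|v|$ the Euclidean norm.

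Since $\psi$ is nonsingular at $0$, the differential $\psi'(0)$ is a linear isomorphism, so every $\xi$ can be written as $\xi=\psi'(0)v$ with $v=\psi'(0)^{-1}\xi$; combining the two facts above gives $\chi_M(x;\xi)\le |v|/r_1$. To replace the Euclidean norm by $|\xi|_\omega$ I would invoke the lower comparison in condition~(2) of Definition~\ref{def_quasi_bounded}: because $|\xi|_\omega^2=(\psi^{*}\omega)_0(v,\overline v)\ge C^{-1}|v|^2$, we have $|v|\le \sqrt{C}\,|\xi|_\omega$, and hence
\[
\chi_M(x;\xi)\le \frac{\sqrt{C}}{r_1}\,|\xi|_\omega .
\]
The constant $\sqrt{C}/r_1$ involves only $r_1$ and the comparison constant $C$ of condition~(2), which by Definition~\ref{def_quasi_bounded} are chosen independently of the base point, yielding the claimed uniform bound depending only on the quasi-bounded geometry data.

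There is no serious analytic obstacle here, as the proof uses only the functoriality of $\chi$ and its value on a ball. The two points that need care are: verifying that $\psi'(0)$ is invertible so that \emph{every} tangent direction $\xi$ is realized as $\psi'(0)v$ --- this is exactly the nonsingularity built into Definition~\ref{def_quasi_bounded} --- and checking that the output constant is genuinely uniform in $x$, which holds because the radius $r_1$ and the constant $C$ do not depend on the chosen point $p$. Completeness of $(M,\omega)$ plays no role in any step, which is precisely why the estimate persists in the present, possibly incomplete, setting.
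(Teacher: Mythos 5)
Your proof is correct and follows essentially the same route as the source the paper cites for this lemma (the paper itself gives no proof, only the reference to Wu--Yau): use the quasi-bounded geometry chart $\psi$ with $\psi(0)=x$, note that nonsingularity makes $\psi'(0)$ invertible, push the extremal affine disk of $B_{\mathbb{C}^n}(r_1)$ forward through $\psi$ to get $\chi_M(x;\xi)\le |v|/r_1$ with $v=\psi'(0)^{-1}\xi$, and convert $|v|$ to $|\xi|_\omega$ via the two-sided comparison in condition (2). Your closing observation that completeness is never used is exactly the point the paper makes when justifying the lemma in the incomplete setting, so nothing further is needed.
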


A lower bound on the Kobayashi-Royden metric can be obtained from the following Lemma.
\begin{lemma}\cite[Lemma 19]{DY17}\label{ub_hsc}
Let $(M,\omega)$ be a hermitian manifold such that the holomorphic sectional curvature $H_\omega \leq -\kappa<0$. Then,
	\begin{equation*}
		\chi_{M}(x;\xi)\geq \sqrt{\frac{\kappa}{2}} |\xi|_\omega, \text{ for all } x\in M, \xi\in T'_x M,
	\end{equation*}
\end{lemma}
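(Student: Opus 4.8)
The plan is to use the Ahlfors--Schwarz lemma to control every holomorphic disk competing in the definition of $\chi_M$. Unwinding that definition, it suffices to prove the pointwise bound $|f'(0)|_\omega \le \sqrt{2/\kappa}$ for every $f \in \Hol(\mathbb{D},M)$ with $f(0)=x$. Indeed, if $f(0)=x$ and $f'(0)=\alpha\xi$ with $\alpha>0$, then $\alpha|\xi|_\omega = |f'(0)|_\omega \le \sqrt{2/\kappa}$, so $1/\alpha \ge \sqrt{\kappa/2}\,|\xi|_\omega$, and taking the infimum over all admissible pairs $(f,\alpha)$ yields $\chi_M(x;\xi)\ge\sqrt{\kappa/2}\,|\xi|_\omega$.

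First I would pull back the metric. Given $f\in\Hol(\mathbb{D},M)$, set $\rho(z)=|f'(z)|_\omega^2$, so that $f^{*}\omega = \rho\,|dz|^2$ is a (possibly degenerate) pseudometric on $\mathbb{D}$. On the open set $U=\{z : f'(z)\neq 0\}$ the pullback is a genuine Hermitian metric on a Riemann surface, whose Gaussian curvature is $K_\rho = -\tfrac{2}{\rho}\,\partial_z\partial_{\bar z}\log\rho$. The key geometric input is the curvature--decreasing property of holomorphic maps: by the Gauss equation for the Chern connection, the holomorphic sectional curvature of the induced metric on the image curve is bounded above by the ambient value $H_\omega(f'(z))\le-\kappa$ in the direction $f'(z)$, and for a one--dimensional Hermitian metric the Gaussian curvature equals twice the holomorphic sectional curvature. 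Hence $K_\rho \le -2\kappa$ on $U$.

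Next I would invoke the Ahlfors--Schwarz lemma in its pseudometric form: since $\rho\,|dz|^2$ has Gaussian curvature at most $-2\kappa$ wherever $\rho>0$, it is dominated by the Poincar\'e metric of constant curvature $-2\kappa$, namely
\begin{equation*}
	\rho(z) \le \frac{2/\kappa}{(1-|z|^2)^2} \quad\text{on }\mathbb{D}.
\end{equation*}
Evaluating at $z=0$ gives $|f'(0)|_\omega^2=\rho(0)\le 2/\kappa$, which is exactly the bound needed in the first step. As a sanity check on the constant, for $M=\mathbb{D}$ equipped with the metric of constant holomorphic sectional curvature $-\kappa$ one has $|v|_\omega=\sqrt{2/\kappa}\,|v|$ at the origin while $\chi_{\mathbb{D}}(0;v)=|v|$, so the inequality is sharp.

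The main obstacle is the curvature--comparison step together with the degeneracy of $\rho$. I would handle the zeros of $f'$ by working on the open set $U$ and using the standard support--function version of the maximum principle underlying Ahlfors--Schwarz, which only requires the curvature bound where $\rho>0$; the isolated zeros of the holomorphic function $f'$ cause no difficulty, since $\log\rho\to-\infty$ there while the comparison is with a smooth dominating metric. I would also keep careful track of the normalization: the factor $2$ relating the Gaussian curvature of a holomorphic curve to its holomorphic sectional curvature is precisely what turns the naive estimate $\sqrt{\kappa}/2$ into the asserted $\sqrt{\kappa/2}$.
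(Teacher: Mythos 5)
Your proof is correct and coincides with the standard argument: the paper itself states this lemma with only a citation to Wu--Yau \cite{DY17}, whose proof of Lemma 19 is exactly your route --- reduce $\chi_M$ to a uniform bound $|f'(0)|_\omega\le\sqrt{2/\kappa}$ over competing disks, pull back $\omega$ to the pseudometric $\rho\,|dz|^2$, get the curvature bound from the curvature-decreasing property for holomorphic maps (equivalently $\partial_z\partial_{\bar z}\log\rho\ge\kappa\rho$ where $\rho>0$), and apply the Ahlfors--Schwarz lemma against the Poincar\'e metric of curvature $-2\kappa$. Your factor-of-two bookkeeping is internally consistent (note the paper's Lemma~\ref{lemma:wu} states ``Gaussian curvature equals $H_g$'' only because it normalizes the induced metric as $2g\,dz\,d\bar z$, whereas you work with $\rho\,|dz|^2$), and your sharpness check on the disk of constant holomorphic sectional curvature $-\kappa$ correctly confirms the constant $\sqrt{\kappa/2}$.
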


The following Lemma reduces the holomorphic sectional curvature by analyzing the Gaussian curvature.

\begin{lemma}\cite[Lemma 4]{WU73}\label{lemma:wu}
	Let $M$ be a hermitian manifold with hermitian metric $g$, and $t$ be a unit tangent vector to $M$ at $p$. Then there exists an imbedded $1$-dimensional complex submanifold $M'$ of $M$ tangent to $t$ such that the Gaussian curvature of $M'$ at $p$ relative to the induced metric equals the holomorphic sectional curvature $H_g(t)$ of $t$ assigned by $g$. 
\end{lemma}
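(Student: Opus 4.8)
The plan is to produce $M'$ as the image of an explicitly chosen holomorphic disc through $p$ whose second fundamental form vanishes at $p$, and then to read the Gaussian curvature off the induced metric. First I would fix holomorphic coordinates $(z^1,\dots,z^n)$ centered at $p$ with $g_{i\bar j}(p)=\delta_{ij}$ and $t=\partial/\partial z^1|_p$; identifying $t$ with the $(1,0)$-direction $\partial_{z^1}$, the quantity to be matched is $H_g(t)=R_{1\bar 1 1\bar 1}(p)$, where $R$ is the curvature of the Chern connection. I would then seek $M'$ as a holomorphic graph $w\mapsto(w,\tfrac12 c_2 w^2+\cdots,\dots,\tfrac12 c_n w^2+\cdots)$ over the $z^1$-axis. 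Such a graph is automatically an imbedded $1$-dimensional complex submanifold tangent to $t$ at $p$, so imbeddedness and tangency come for free and only the curvature identity remains.

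The key step is the choice of the quadratic coefficients. Writing $T$ for the lift of $\partial_w$ along the disc, a direct computation with the Chern symbols $\Gamma^k_{ij}=g^{k\bar l}\partial_i g_{j\bar l}$ gives $\nabla^M_T T|_p=\Gamma^1_{11}(p)\,\partial_{z^1}+\sum_{j\ge 2}\bigl(c_j+\Gamma^j_{11}(p)\bigr)\partial_{z^j}$. Since $g_{i\bar j}(p)=\delta_{ij}$, the directions $\partial_{z^2},\dots,\partial_{z^n}$ span the normal space to $M'$ at $p$, so the second fundamental form is $\sigma(t,t)=\sum_{j\ge 2}\bigl(c_j+\Gamma^j_{11}(p)\bigr)\partial_{z^j}$. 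Setting $c_j=-\Gamma^j_{11}(p)$ for $j=2,\dots,n$ forces $\sigma(t,t)=0$ at $p$: the $n-1$ free quadratic parameters match exactly the $n-1$ normal directions, which is precisely why the osculating curve always exists. This cancellation is the heart of the argument.

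Finally I would compute the Gaussian curvature of $(M',\iota^{*}g)$ at $p$. Because $M'$ has complex dimension one its induced Hermitian metric is automatically K\"ahler, so its Chern and Levi-Civita connections agree and its Gaussian curvature coincides with its own holomorphic sectional curvature in the direction $t$. Expanding $K_{M'}(p)=-h^{-1}\partial_w\partial_{\bar w}\log h|_0$ for the induced coefficient $h(w,\bar w)=g_{i\bar j}(\phi(w))\,\phi^i_w\overline{\phi^j_w}$, the terms quadratic in the first derivatives of $g$ are exactly those controlled by $\sigma(t,t)$; the osculation condition $c_j=-\Gamma^j_{11}(p)$ removes them and leaves precisely $R_{1\bar 1 1\bar 1}(p)=H_g(t)$. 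Conceptually this is the Gauss equation $H_{M'}(t)=H_g(t)-\lVert\sigma(t,t)\rVert^2$ evaluated where $\sigma(t,t)=0$. The main obstacle I anticipate is the bookkeeping in the non-K\"ahler case: the Chern connection carries torsion, so in carrying out the $\partial_w\partial_{\bar w}$ expansion one must verify that the torsion contributions are subsumed into $R_{1\bar 1 1\bar 1}$ and do not survive after the cancellation. The reduction to complex dimension one for $M'$, where every metric is K\"ahler, is what keeps this final verification manageable.
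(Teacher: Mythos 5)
Your proposal is correct and is essentially the argument behind the cited result: the paper itself gives no proof (it quotes Lemma 4 of \cite{WU73}), and Wu's proof is exactly this second-order osculation, choosing the quadratic coefficients $c_j=-\Gamma^j_{11}(p)$ so that the second fundamental form of the holomorphic disc vanishes at $p$ and then identifying the Gaussian curvature of the induced metric with $R_{1\bar 1 1\bar 1}(p)=H_g(t)$. Your concluding worry about torsion is also resolved the way you suspect: a direct expansion of $-h^{-1}\partial_w\partial_{\bar w}\log h$ at $0$ for $h=g_{i\bar j}(\phi)\phi^i_w\overline{\phi^j_w}$ shows all first-derivative terms of $g$ combine, after the osculation, into exactly the Chern curvature term $-\partial_1\partial_{\bar 1}g_{1\bar 1}+\sum_q|\partial_1 g_{1\bar q}|^2$, so nothing extra survives in the Hermitian case.
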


For each positive smooth function $g(z,\overline{z})$ defined in an open set in $\mathbb{C}$, define a real-valued function $H(g)$ in the same open set as follows:
\begin{equation}\label{eq:gaussian_curv}
	H(g)=-\frac{1}{g}\frac{\partial^2 \log g}{\partial z{\partial} \overline z}.
\end{equation}
$H(g)$ is the Gaussian curvature of the metric $ds^2=2gdz d\overline{z}$. 

\begin{prop}\cite[p19, Proposition 3.1]{K70}\label{prop:HSC}
	For positive smooth functions $f$ and $g$,
	
	(a) $cH(cg)=H(g)$ for all positive numbers $c$;
	
	(b) $fg H(fg)=fH(f)+gH(g)$;
	
	(c) $(f+g)^2 H(f+g)\leq f^2H(f)+g^2 H(g)$
	
	(d) If $H(f)\leq -k_1<0$ and $H(g)\leq -k_2<0$, then 
	\begin{equation*}
		H(f+g)\leq \frac{-k_1k_2}{k_1+k_2}.
	\end{equation*}
	
\end{prop}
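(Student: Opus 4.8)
The plan is to reduce everything to logarithmic differentiation together with two elementary pointwise inequalities. Writing $\partial = \partial/\partial z$ and $\bar\partial = \partial/\partial\bar z$, I would first record that $H(g) = -\frac{1}{g}\partial\bar\partial\log g$, and that for a positive real-valued $g$ one has the convenient form
\begin{equation*}
	g^2 H(g) = -g\,\partial\bar\partial\log g = -\partial\bar\partial g + \frac{|\partial g|^2}{g},
\end{equation*}
obtained from $\partial\bar\partial\log g = \frac{\partial\bar\partial g}{g} - \frac{\partial g\,\bar\partial g}{g^2}$ and $\bar\partial g = \overline{\partial g}$. Parts (a) and (b) then fall out immediately: since $\log(cg) = \log c + \log g$ and $\log(fg) = \log f + \log g$, the operator $\partial\bar\partial\log$ annihilates the constant $\log c$ and is additive on products, giving $cH(cg) = -\frac{1}{g}\partial\bar\partial\log g = H(g)$ and $fgH(fg) = -\partial\bar\partial(\log f + \log g) = fH(f) + gH(g)$.

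For (c), the idea is to apply the displayed form above to $f$, $g$, and $f+g$ separately. Writing out $(f+g)^2 H(f+g)$, $f^2 H(f)$, and $g^2 H(g)$ in this way, the second-derivative terms $-\partial\bar\partial f$ and $-\partial\bar\partial g$ cancel between the two sides of the claimed inequality, which then collapses to the purely algebraic statement
\begin{equation*}
	\frac{|\partial f + \partial g|^2}{f+g} \leq \frac{|\partial f|^2}{f} + \frac{|\partial g|^2}{g},
\end{equation*}
required to hold pointwise. Setting $a = \partial f$ and $b = \partial g$ and clearing the positive denominator $fg(f+g)$, this is equivalent to $|ag - bf|^2 \geq 0$, which is the main computational content of the whole proposition.

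Finally, for (d) I would feed the hypotheses $H(f) \leq -k_1$ and $H(g) \leq -k_2$ into (c): since $f^2 H(f) \leq -k_1 f^2$ and $g^2 H(g) \leq -k_2 g^2$, part (c) yields $H(f+g) \leq -\frac{k_1 f^2 + k_2 g^2}{(f+g)^2}$. It then remains to bound the quotient below uniformly in $f,g>0$, i.e.\ to verify $(k_1+k_2)(k_1 f^2 + k_2 g^2) \geq k_1 k_2 (f+g)^2$; expanding, this is exactly $(k_1 f - k_2 g)^2 \geq 0$, with equality when $k_1 f = k_2 g$, giving the stated bound $H(f+g) \leq -\frac{k_1 k_2}{k_1+k_2}$.

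The only step demanding real care is the bookkeeping in (c): one must check that the second-order terms genuinely cancel and that the remaining quotient terms assemble into the Cauchy--Schwarz-type inequality above. Once that reduction is secured, both (c) and (d) rest on recognizing a perfect square, so I expect no further obstacle.
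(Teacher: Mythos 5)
Your proof is correct: parts (a) and (b) follow from additivity of $\partial\bar\partial\log$, the reduction of (c) to $|g\,\partial f - f\,\partial g|^2 \geq 0$ is exactly the right computation (using that $\bar\partial g = \overline{\partial g}$ for real $g$), and (d) follows from (c) via the perfect square $(k_1 f - k_2 g)^2 \geq 0$. The paper itself states this proposition without proof, citing Kobayashi's book, and your argument is essentially the standard one given there, so nothing further is needed.
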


\section{Proof of Theorem~\ref{thm:general_statement}}

The following proposition is very slightly different from the form written without proof in  \cite[p 280]{KLE78}. 

\begin{prop}\label{prop:hsc-ub}
	Let $\Omega$ be a bounded weakly pseudoconvex domain in $\mathbb{C}^n$ with hermitian metrics $h$ and $g$. Let $m \in \mathbb{N}$ and consider a hermitian metric $g_\Omega:=m h+g$. Then	
	\begin{equation}\label{eq:estimate_hsc}
		H_{g_\Omega}\leq \left(\frac{1}{1+mu}\right)^2\|H_{g}\|_{\Omega}+\left(\frac{u}{1/m+u} \right)\frac{1}{m}H_{h},
	\end{equation}
	where $u=\min h(t,t)$ for all $t\in \mathbb{C}^n$ with $g(t,t)=1$. 
	\begin{proof}
		Let $t'$ be a unit vector at $p\in \Omega$ relative to $g_\Omega=m h+g$. By Lemma~\ref{lemma:wu}, there exists a $1$-dimensional imbedded complex submanifold $\Omega'$ tangent to $t'$ such that 
\begin{equation*}
	H(mh'+g')(p)=H(mh+g)(t'),
\end{equation*}		
where $g'$ and $h'$ are respectively the induced metrics of $g$ and $h$ on $\Omega$, and therefore $g'+mh'$ is the induced metric of $g+mh$ on $\Omega'$. Thus we can regard $g$ and $h$ as positive smooth functions defined in an open set in $\mathbb{C}$,  the holomorphic sectional curvatures $H(mh+g)$ of a hermitian metric $mh+g$ can be regarded as ~\eqref{eq:gaussian_curv}. Then for each $m\in \mathbb{N}$, by Proposition~\ref{prop:HSC},
		\begin{align*}
			H_{g_\Omega}=H(mh+g)&\leq \left(\frac{mh}{mh+g} \right)^2H(mh)+\left(\frac{g}{mh+g} \right)^2H(g)\\
			&=\left(\frac{mh}{mh+g} \right)^2 \frac{1}{m} H(h)+\left(\frac{g}{mh+g} \right)^2H(g)\\
			&=\left(\frac{u}{u+\frac{1}{m}} \right)^2 \frac{1}{m} H(h)+\left(\frac{1}{1+mu} \right)^2H(g)\\
			&\leq \left(\frac{1}{1+mu} \right)^2||H_g||_{\Omega}+\left(\frac{u}{u+\frac{1}{m}} \right)^2 \frac{1}{m} H_h,
		\end{align*}	
		where $u=\min h(t,t)$ for all $t\in \mathbb{C}^n$ with $g(t,t)=1$. Here $||H_g||_{\Omega}$ means the supremum norm of $H_g(t)$ (also see p.280 in \cite{KLE78}).	
	\end{proof}

\end{prop}

In the following proof, we refer $\omega$ and $g$ refer the same metric in one case viewed as a K\"ahler two-form and in the other case viewed as a metric tensor.

\begin{proof}[Proof of Theorem~\ref{thm:general_statement}]
	Consider a K\"ahler metric of the form 
	\begin{equation}\label{eq:Kahler-metric}
		\omega_\Omega:=m \omega_{P}+\omega, \quad m>0,
	\end{equation}		
	where $\omega_{P}$ is the Poincar\'e metric as a two-form of a ball $D$ in $\mathbb{C}^n$ with $\Omega \Subset D$.  Since $\omega_{P}$ restricted to $\Omega$ is incomplete, $\omega_\Omega$ is complete on $\Omega$ for each $m>0$ if and only if $\omega$ is complete.  
	
	For $\epsilon>0$, let $\mathcal C_\epsilon:=\{ z \in  \Omega  :\, H_{\omega}>-\epsilon \}$. Then it follows from Proposition~\ref{prop:hsc-ub}, on the set $\mathcal C_\epsilon$, one has
	\begin{equation}\label{eq:estimate_hsc}
		H_{\omega_\Omega}\leq \left(\frac{1}{1+mu}\right)^2\|H_{\omega}\|_{\Omega}+\left(\frac{u}{1/m+u} \right)\frac{1}{m}H_{\omega_{P}},
	\end{equation}

	where $u=\min g_{P}(t,t)$ for all $t\in \mathbb{C}^n$ with $g_B(t,t)=1$. Since the sum of two K\"ahler metrics with negative upper bounds for holomorphic sectional curvatures has a negative upper bound for the holomorphic sectional curvature (see Lemma 2 in \cite{WU73}), it is enough to control the quantity of \eqref{eq:estimate_hsc} on $\mathcal C_\epsilon$. 
	
 Notice that the quasi-bounded geometry assumption of $\omega$ forces that $\|H_{\omega}\|_{\Omega}$ is bounded by some constant. Then $\mathcal C_{\epsilon}$ belongs to a compact subset of $ \Omega$ which does not touch $\partial \Omega$. Notice that $u$ can be zero on $\mathcal C_{\epsilon}$ only when $u$ takes (limit) values on $ \partial \Omega$. Then by the hypothesis on the holomorphic sectional curvature of $\omega$, $u$ on $\mathcal C_{\epsilon}$ is bounded below by a positive constant. Thus we can find sufficiently large $m\gg 0$ so that the right-hand side of \eqref{eq:estimate_hsc} becomes uniformly negative, and $H_{\omega_\Omega}$ has a negative upper bound for $m$ sufficiently large. 
	
	Next, we show that $\omega_{\Omega}$ admits quasi-bounded geometry by checking conditions in Definition \ref{def_quasi_bounded}. From the quasi-bounded geometry of $\omega$, the first requirement in Definition \ref{def_quasi_bounded} is clearly satisfied. Since the ball $D$ contains $\Omega$, the Poincar\'e metric $\omega_P$ on $\Omega$ is merely a weighted hermitian inner product in $\mathbb{C}^n$, and thus the second requirement is satisfied trivially. For the last requirement, $\omega$ satisfies the last requirement by the hypothesis, and $m\omega_p=m\sqrt{-1}\partial \overline{\partial} \log K_{P}$ never blows up with any $k$th-order derivative on $\Omega$, where $K_p$ is the Bergman kernel of $D$. This proves that $\omega_{\Omega}$ admits the
	quasi-bounded geometry. Consequently, by Lemma~\ref{ub_hsc} and Lemma~\ref{q-bounded}, $\omega_{\Omega}$ is uniformly equivalent to the Kobayashi--Royden metric. 
	
	Now, if $\omega$ is complete, $\omega_{\Omega}$ is also uniformly equivalent to the complete K\"ahler--Einstein metric of negative scalar curvature by Theorem 3 in \cite{DY17}. Moreover, by quasi-bounded geometry of $\omega$ and Lemma~\ref{q-bounded}, we have
	\begin{equation*}
		C^{-1} \, \chi_{\Omega} \leq \sqrt{\omega}
	\end{equation*}
	where $\chi_{\Omega}$ denotes the Kobayashi--Royden metric and $C$ is a universal constant. On the other hand, by the negative upper bound of $\omega_{\Omega}$ and Lemma~\ref{ub_hsc}, we also have 
	\begin{equation*}
		\sqrt{\omega_{\Omega}} \leq C \chi_{\Omega}.
	\end{equation*}
	Since $\omega\leq \omega_{\Omega}$ from the construction of $\omega_{\Omega}$, we obtain in all 
	\begin{equation*}
		C^{-1}  \, \chi_{\Omega} \leq \sqrt{\omega}  \leq \sqrt{\omega_{\Omega}} \leq C \chi_{\Omega}.
	\end{equation*}
	Hence $\omega$ is also uniformly equivalent to the Kobayashi--Royden metric.
	
	Otherwise, $\omega$ is incomplete, and the rest of conclusion follows from Proposition~\ref{thm:general_statement2} and Corollary~\ref{coro:incomplete}. 
	
\end{proof}

\begin{prop}\label{thm:general_statement2}
	Let $\Omega$ be a bounded weakly pseudoconvex domain in $\mathbb{C}^n$ with an incomplete K\"ahler metric $\omega$ satisfying all assumptions in Theorem~\ref{thm:general_statement}.	Then there is no complete K\"ahler metric with negatively pinched holomorphic sectional curvature. The same conclusion holds for the complete K\"ahler--Einstein metric of negative scalar curvature. 
	\begin{proof}
		By following the proof of Theorem~\ref{thm:general_statement}, an incomplete K\"ahler metric $g_{\Omega}$ with negatively pinched holomorphic sectional curvature is constructed. If a complete K\"ahler metric with negatively pinched holomorphic sectional curvature exists, this metric must be uniformly equivalent with the Kobayashi–Royden metric \cite[Theorem 2,3]{DY17}, and in particular, the Kobayashi–Royden metric must be complete, so it is a contradiction. In particular, for the complete K\"ahler-Einstein metric of negative scalar curvature, the holomorphic sectional curvature cannot be negatively pinched.
	\end{proof}

\end{prop}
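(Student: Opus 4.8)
The plan is to pivot the entire argument on a single impossibility: that the Kobayashi--Royden metric $\chi_\Omega$ would have to be both complete and incomplete. I would first reuse the comparison metric built in the proof of Theorem~\ref{thm:general_statement}, namely $g_\Omega := m\omega_P + \omega$ with $m$ large, where $\omega_P$ is the (incomplete) Poincar\'e metric of a ball $D$ with $\Omega \Subset D$. Since $g_\Omega \geq \omega$ and $\omega_P$ is incomplete, $g_\Omega$ is complete precisely when $\omega$ is; as $\omega$ is incomplete by hypothesis, $g_\Omega$ is incomplete as well.

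I would then carry over from the proof of Theorem~\ref{thm:general_statement} the two structural facts about $g_\Omega$. First, for $m \gg 0$ its holomorphic sectional curvature is negatively pinched: the upper bound is the estimate of Proposition~\ref{prop:hsc-ub} made uniformly negative on $\mathcal C_\epsilon$, and the lower bound follows from the derivative control in the quasi-bounded geometry. Second, $g_\Omega$ itself has quasi-bounded geometry, verified directly against Definition~\ref{def_quasi_bounded} by pulling back along the nonsingular charts $\psi : U \to \Omega$ furnished by the quasi-bounded geometry of $\omega$ and observing that the added term $m\omega_P$ contributes only smooth, uniformly bounded derivatives since $\Omega \Subset D$. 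With both facts available, Lemma~\ref{q-bounded} gives $\chi_\Omega \leq C\sqrt{g_\Omega}$ and Lemma~\ref{ub_hsc} gives $\chi_\Omega \geq c\sqrt{g_\Omega}$, so $g_\Omega$ and $\chi_\Omega$ are uniformly equivalent. Uniformly equivalent metrics are simultaneously complete or incomplete, so the incompleteness of $g_\Omega$ forces $\chi_\Omega$ to be incomplete; equivalently $\Omega$ fails to be Kobayashi complete.

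Now I would argue by contradiction. Suppose $\Omega$ admitted a complete K\"ahler metric $\tilde{g}$ with negatively pinched holomorphic sectional curvature, so in particular $H_{\tilde{g}} \leq -\kappa < 0$. Lemma~\ref{ub_hsc} then forces $\chi_\Omega \geq \sqrt{\kappa/2}\,\sqrt{\tilde{g}}$ pointwise, and passing to integrated distances shows that the Kobayashi distance dominates a constant multiple of the complete distance of $\tilde{g}$; completeness of $\tilde{g}$ therefore transfers to $\chi_\Omega$ (and in fact $\tilde{g}$ is uniformly equivalent to $\chi_\Omega$ by Theorems~2 and 3 of \cite{DY17}). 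This contradicts the incompleteness of $\chi_\Omega$ established above, so no such $\tilde{g}$ exists. Finally, the complete K\"ahler--Einstein metric of negative scalar curvature guaranteed by \cite{NMY83} is a complete K\"ahler metric on $\Omega$, so the same contradiction rules out its holomorphic sectional curvature being negatively pinched.

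The step I expect to be most delicate is the transfer of completeness in the last paragraph: converting the pointwise inequality $\chi_\Omega \geq c\sqrt{\tilde{g}}$ into completeness of the Kobayashi distance. This requires working at the level of integrated distances --- using Royden's identification of the Kobayashi distance with the integrated Kobayashi--Royden metric to get $d^K_\Omega \geq c\,d_{\tilde{g}}$, deducing that a $d^K_\Omega$-Cauchy sequence is $d_{\tilde{g}}$-Cauchy hence convergent in $\Omega$, and invoking continuity of the Kobayashi distance on the bounded domain $\Omega$ to conclude convergence in $d^K_\Omega$. A secondary point requiring care is confirming that $g_\Omega$ inherits the full higher-order derivative bounds of Definition~\ref{def_quasi_bounded}, not merely the two-sided metric bound, since only then is Lemma~\ref{q-bounded} applicable to $g_\Omega$.
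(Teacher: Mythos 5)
Your proposal is correct and follows essentially the same route as the paper's proof: construct the incomplete comparison metric $g_\Omega = m\omega_P + \omega$ with negatively pinched holomorphic sectional curvature and quasi-bounded geometry, apply Lemmas~\ref{q-bounded} and~\ref{ub_hsc} to get uniform equivalence with the Kobayashi--Royden metric (hence incompleteness of $\chi_\Omega$), and then contradict the existence of any complete K\"ahler metric with pinched holomorphic sectional curvature, specializing at the end to the K\"ahler--Einstein metric. Your only deviation is a refinement of detail: where the paper simply cites \cite[Theorems 2, 3]{DY17} to conclude the hypothetical metric forces $\chi_\Omega$ complete, you observe that the one-sided bound from Lemma~\ref{ub_hsc}, promoted to integrated distances via Royden's identification, already transfers completeness---a slightly more economical closing of the same contradiction.
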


\begin{proof}[Proof of Corollary~\ref{cor:new_class}]
	It follows from Theorem~\ref{thm:general_statement} that the holomorphic sectional curvature of $g_{\Omega}$ in \eqref{eq:Kahler-metric} is negatively pinched. Therefore, the second fundamental form of $S$ with respect to the restriction $g_{\Omega}|_S$ is bounded. By the decreasing property for holomorphic sectional curvature and the Gauss-Codazzi equation,  the holomorphic sectional curvature of $g_{\Omega}|_S$ is negatively pinched. The conclusion follows from Theorem 2 and Theorem 3 in \cite{DY17}.
\end{proof}

\begin{proof}[Proof of Corollary~\ref{cor:new_class2}]
	The proof is the same as the proof of Corollary 2 of \cite{SKY09}, as long as we confirm that the complete K\"ahler-Einstein metric has negative Ricci curvature and has bounded Riemannian sectional curvature. In the case of the complete K\"ahler-Einstein metric of negative scalar curvature based on Theorem~\ref{thm:general_statement}, it has quasi-bounded geometry, so it satisfies a stronger condition than the required condition \cite[Lemma 31]{DY17}.
\end{proof}

\section{Quasi-bounded geometry and Carath\'eodory--Reiffen metric}
Carath\'eodory completeness refers to the case where the Carath\'edory distance is complete as a topological distance \cite[Chapter 14]{JP13}. Even for bounded domains in $\mathbb{C}^n$, information about Carath\'eodory completeness and Carath\'eodory metric or distance is a very difficult question. In this section, we will discuss the relationship between quasi-bounded geometry and the Carath\'edory metric on K\"ahler manifolds.

\begin{lemma}\label{lemma:Caratheodory}
	Given $(M,\omega)$ a K\"ahler manifold, assume $\omega$ admits quasi-bounded geometry.  Then there exists $C>0$ such that
	\begin{equation*}
		\gamma_{M}(p;v)\leq C ||v||_{\omega} \text{ for any } p\in M, v\in T_p M. 
	\end{equation*}
	
	\begin{proof}
		For each $p\in M$, take any non-singular $\psi : B_{\mathbb{C}^n}(r) \rightarrow M$ by the definition of quasi-bounded geometry that maps $0$ to $p$. Then by the contraction (metric decreasing) property of the Carath\'eodory--Reiffen metric and the fact that $\gamma_{B_r(0)}$ coincides with the Poincar\'e metric of $B_{\mathbb{C}^n}(r)$, 
		\begin{align*}
			\gamma_{M}(p;v)&\leq \gamma_{B_{\mathbb{C}^n}(r)}(0;\psi^{*}v)\\
			&=\frac{1}{r}||\psi^{*}(v) ||_{\mathbb{C}^n}\\
			&\leq \frac{C}{r}||v||_{\omega}.
		\end{align*}
		In the last line, we use the requirement (2) of the definition of quasi-bounded geometry. We can replace $\frac{C}{r}$ by $C>0$ and the proof follows. 
	\end{proof}	
	
\end{lemma}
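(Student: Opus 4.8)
The plan is to bound the Carath\'eodory--Reiffen metric $\gamma_M$ from above by the ambient K\"ahler norm $\|\cdot\|_\omega$, using the local charts supplied by the quasi-bounded geometry together with the metric-decreasing (contraction) property of $\gamma$. The core idea is entirely local: at each point $p \in M$ I would produce a holomorphic map from a fixed-size Euclidean ball into $M$ sending $0 \mapsto p$, then pull $\gamma_M$ back through it, compute $\gamma$ on the ball exactly, and finally trade Euclidean norms for $\omega$-norms via condition (2) of Definition~\ref{def_quasi_bounded}.

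First I would fix $p \in M$ and invoke the quasi-bounded geometry of $\omega$ to obtain the nonsingular holomorphic map $\psi : U \to M$ with $B_{\mathbb{C}^n}(r_1) \subset U \subset B_{\mathbb{C}^n}(r_2)$ and $\psi(0) = p$. Restricting to the inner ball of radius $r = r_1$ keeps the domain uniform across all $p$. Next I would apply the decreasing property of the Carath\'eodory--Reiffen metric under the holomorphic map $\psi$: since $\gamma$ contracts under holomorphic maps, one has $\gamma_M(p; v) \le \gamma_{B_{\mathbb{C}^n}(r)}(0; w)$ for the preimage vector $w = (d\psi)_0^{-1} v$, using that $\psi$ is nonsingular so $(d\psi)_0$ is invertible. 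The second step is to use the explicit identification that on a Euclidean ball the Carath\'eodory--Reiffen metric agrees with the Poincar\'e metric, giving $\gamma_{B_{\mathbb{C}^n}(r)}(0; w) = \tfrac{1}{r}\|w\|_{\mathbb{C}^n}$ at the center. Finally, I would invoke condition (2) of quasi-bounded geometry to compare the Euclidean norm of $w$ (equivalently of $v$ pulled back) with the $\omega$-norm of $v$, absorbing the comparison constant $C$ and the factor $1/r$ into a single constant depending only on $r_1, r_2, n$.

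The construction makes clear that completeness of $\omega$ is never used, only the uniformity of the charts; this parallels the remark preceding Lemma~\ref{q-bounded} and is why the same bound survives in the incomplete branch of the dichotomy. The one point demanding care is the exact evaluation of $\gamma$ at the center of the ball and the bookkeeping of where the vector $v$ versus its pullback $\psi^{*}v$ lives: the contraction inequality must be applied in the correct direction (from $M$ down to the disk source, not the reverse), and the norm comparison from condition (2) must be applied to the pulled-back vector rather than to $v$ directly.

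I expect the main obstacle to be purely notational rather than conceptual: keeping the pullback $\psi^{*}v$ and the comparison constant straight so that the chain of inequalities composes cleanly, and justifying that the center-value of the Carath\'eodory--Reiffen metric on a ball is $\tfrac{1}{r}\|\cdot\|$. Since the paper's statement requires only the upper bound $\gamma_M(p;v) \le C\|v\|_\omega$ (not a two-sided equivalence), no lower bound and hence no curvature hypothesis is needed here, and the argument terminates once the three inequalities are chained together with the final absorption of $C/r$ into a single constant $C$.
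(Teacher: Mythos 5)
Your proposal is correct and follows essentially the same route as the paper's proof: restrict the quasi-bounded-geometry chart $\psi$ to the inner ball $B_{\mathbb{C}^n}(r_1)$, apply the contraction property of $\gamma$ to pull the estimate back to the ball, evaluate $\gamma_{B_{\mathbb{C}^n}(r)}(0;\cdot)=\tfrac{1}{r}\|\cdot\|_{\mathbb{C}^n}$ at the center, and convert Euclidean to $\omega$-norms via condition (2). Indeed your write-up is cleaner on the one point the paper glosses over, namely that the contraction inequality is applied to $w=(d\psi)_0^{-1}v$ using nonsingularity of $\psi$ (the paper's notation $\gamma_{B_{\mathbb{C}^n}(r)}(\psi^{*}(p);\varphi^{*}v)$ is imprecise on exactly this bookkeeping).
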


From this Lemma, we get one implication for the Carath\'eodory incompleteness .
\begin{coro}\label{coro:incomplete}
	Let $M$ be a complex manifold with an incomplete K\"ahler metric $\omega$ that admits quasi-bounded geometry.  Then $M$ is not Carath\'eodory complete. 
\end{coro}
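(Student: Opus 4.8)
The plan is to deduce the corollary from Lemma~\ref{lemma:Caratheodory} by passing from the infinitesimal estimate to the associated integrated distances and then arguing by contradiction. First I would integrate the pointwise bound $\gamma_M(p;v)\leq C\|v\|_\omega$ along curves: for any piecewise $C^1$ path $\sigma$ joining $p$ to $q$ in $M$, the Carath\'eodory--Reiffen length of $\sigma$ is at most $C$ times its $\omega$-length, so taking the infimum over all such $\sigma$ bounds the inner (integrated) Carath\'eodory distance by $C\,d_\omega$. Since the Carath\'eodory distance $c_M$ never exceeds its inner version, one obtains the comparison
\[
	c_M(p,q)\leq C\, d_\omega(p,q)\quad\text{for all } p,q\in M,
\]
where $d_\omega$ denotes the Riemannian distance of $\omega$. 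In particular, every $d_\omega$-Cauchy sequence is automatically $c_M$-Cauchy.

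Next I would invoke the incompleteness of $\omega$. Since $(M,d_\omega)$ is not complete, there is a $d_\omega$-Cauchy sequence $\{p_k\}$ with no limit in $M$. A standard compactness argument shows that such a sequence must eventually leave every compact subset of $M$: if some subsequence stayed inside a compact set $K$, then on $K$, where $d_\omega$ induces the manifold topology, it would admit a convergent subsequence, and being Cauchy the whole sequence would converge, contradicting the assumed absence of a limit. By the comparison above, $\{p_k\}$ is $c_M$-Cauchy, so if $M$ were Carath\'eodory complete it would converge in $c_M$ to some point $p_\infty\in M$.

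Under the reading that Carath\'eodory completeness means precisely that $c_M$ is a genuine distance inducing the topology of $M$ and that $(M,c_M)$ is complete, $c_M$-convergence of $\{p_k\}$ to $p_\infty$ forces $p_k\to p_\infty$ in the manifold topology. Then the sequence would lie in a compact neighborhood of $p_\infty$, contradicting the fact that it leaves every compact set; this contradiction establishes that $M$ cannot be Carath\'eodory complete. I expect the main obstacle to be exactly this last identification: one must know that Carath\'eodory completeness entails that the $c_M$-topology coincides with the manifold topology (equivalently, that closed $c_M$-balls are compact), so that $c_M$-convergence of the divergent sequence is genuinely impossible. This is the content guaranteed by interpreting $c_M$ as a \emph{topological} distance, and it is the point on which the whole argument hinges.
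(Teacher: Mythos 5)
Your proposal is correct and follows exactly the route the paper intends: the paper offers no written proof beyond the remark that the corollary follows from Lemma~\ref{lemma:Caratheodory}, and your argument is precisely the natural filling-in --- integrate the infinitesimal bound $\gamma_M(p;v)\leq C\|v\|_\omega$ to get $c_M\leq C\,d_\omega$, transfer a non-convergent $d_\omega$-Cauchy sequence to a $c_M$-Cauchy one, and derive a contradiction using that Carath\'eodory completeness (as the paper defines it, with $c_M$ a topological distance) forces $c_M$-convergence to imply convergence in the manifold topology. You even correctly identify and handle the one genuinely delicate point, namely that the conclusion hinges on the $c_M$-topology agreeing with the manifold topology, which the paper's definition of Carath\'eodory completeness supplies.
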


\section{Discussion}
It would be very interseting to confirm whether an example corresponding to the class with incomplete K\"ahler metric $\omega$ in Theorem~\ref{thm:general_statement} exists. For example, by Corollary~\ref{coro:incomplete}, there is no incomplete K\"ahler metric admitting a quasi-bounded geometry in the case of Carath\'eodory complete domains.

\subsection*{Conflicts of interest}
The corresponding author states that there is no conflict of interest. 

\bibliographystyle{spmpsci}
\bibliography{reference}

\end{document}